\newtheorem{theorem}{Theorem}
\newtheorem{lemma}[theorem]{Lemma}
\newtheorem{corollary}[theorem]{Corollary}
\theoremstyle{definition}
\begin{document}

\title{On the Structure of nil-Temperley-Lieb Algebras of type A}

\author{Niket Gowravaram \and Tanya Khovanova}
\date{}
\maketitle

\begin{abstract}
We investigate nil-Temperley-Lieb algebras of type A. We give a general description of the structure of monomials formed by the generators. We also show that the dimensions of these algebras are the famous Catalan numbers by providing a bijection between the monomials and Dyck paths. We show that the distribution of these monomials by degree is the same as the distribution of Dyck paths by the sum of the heights of the peaks minus the number of peaks.
\end{abstract}


\section{Introduction}

This research was done as a part of PRIMES: a program that helps high-school students conduct research. The project was suggested by Prof.~Alexander Postnikov.

Postnikov suggested the following construction to generate an algebra from a graph \cite{Postnikov}:

Given a simple graph $G$, we construct a unital algebra associated with it. For every vertex we have a generator. The square of each generator is 0. Suppose $x$ and $y$ are two generators. If their corresponding two vertices are not connected by an edge, then the corresponding generators commute: $xy=yx$. If there is an edge connecting the vertices, then the following relations hold: $xyx=0$ and $yxy=0$.

Postnikov called these algebras \textit{XYX algebras}. The algebras were inspired by the idea of fully-commutative elements of Coxeter groups \cite{Stembridge1995}.

When we discovered that the dimensions of XYX algebras related to path are Catalan numbers, we discussed it with Prof.~Richard Stanley, the author of a recent book on Catalan Numbers \cite{Stanley2015}. Richard Stanley recognized the algebras we studied as nil-Temperley-Lieb algebras. The fact that the dimensions of nil-Temperley-Lieb algebras of type A are Catalan numbers is widely known \cite{Benkart2015,Fomin1995, Stanley2015}.

In our research we also calculated the distribution of the dimensions of these algebras by degree and found a bijection to a statistic of Dyck paths.

We later discovered that our statistic is equivalent to counting inversions in 321-avoiding permutations \cite{Cheng2012}.

In this paper, we provide some basic definitions and examples of the dimensions of the algebras for small cases in Section~\ref{sec:examples}. We go on to describe the structure of the monomials in the algebra and divide them into descending runs of generators in Section~\ref{sec:structure}. We then provide a bijection between the monomials and Dyck paths, by corresponding the descending runs of generators to peaks in the Dyck paths in Section~\ref{sec:Dyck}. In Section~\ref{sec:thm} we show that the number of monomials of degree $d$ in the algebra corresponding to the path graph $P_n$ is equal to the number of Dyck paths of length $2n$ such that the sum of the heights of the peaks minus the number of peaks is $d$. In Section~\ref{sec:avoiding} we explain the connection to 321-avoiding permutations. In Section~\ref{sec:other} we conclude with  some corollaries of our bijection and some interesting properties.

\section{Definitions and Small Examples}\label{sec:examples}

In this paper, we deal only with path graphs. Let $P_n$ be the path graph with $n$ vertices. From now on we will number the vertices along the path, and represent the generators as $x_i$, where $1 \leq i \leq n$ and $n$ is the length of the path. We denote the path graph with $n$ vertices as $P_n$.

The \textit{nil-Temperley-Lieb algebra} corresponding to $P_n$ is a unital algebra generated by $x_i$ and the following relations:

\begin{itemize}
\item $x_i^2 = 0$
\item $x_i x_j = x_jx_i$, if $|i-j| > 1$
\item $x_i x_{i+1} x_i = 0$, for $1 \leq i < n$
\item $x_{i+1} x_{i} x_{i+1} = 0$, for $1 \leq i < n$.
\end{itemize}

We start with small examples of $P_0$, $P_1$, and $P_2$. Here are the bases and dimensions of the corresponding algebras:

\begin{itemize}
\item $P_0$. Basis: 1. Dimension 1.
\item $P_1$. Basis: 1, $x_1$. Dimension 2.
\item $P_2$. Basis: 1, $x_1$, $x_2$, $x_1x_2$, and $x_2x_1$. Dimension 5.
\item $P_3$. Basis: 1, $x$, $x_2$, $x_3$, $x_1x_2$, $x_1x_3$, $x_2x_1$, $x_2x_3$, $x_3x_2$, $x_1x_2x_3$, $x_1x_3x_2$, $x_2x_1x_3$, $x_3x_2x_1$, $x_2x_1x_3x_2$. Dimension 14.
\end{itemize}

We see that the dimensions of these algebras form one of the most famous sequences in mathematics: the Catalan numbers \cite{oeiscatalan,Stanley2015}. 

We would like to look at the structure of the monomials in this algebra in more detail.

\section{Structure of the monomials}\label{sec:structure}

The algebra corresponding to a path as a vector space is generated by monomials. Each of the monomials is a product of generators corresponding to vertices. 

We call two monomials \textit{equivalent} if they correspond to the same element in the basis. We call a monomial \textit{reducible} if it is equivalent to the zero monomial. We call a monomial an \textit{nTL-monomial} if it is an irreducible monomial that is lexicographically smallest in its equivalency class.

Consider some examples: monomials $x_3x_1x_2$ and $x_1x_3x_2$ are equivalent and the latter is an nTL-monomial. The monomials $x_3x_1x_2x_1$ and $x_1x_3x_2x_1$ are equivalent and reducible.

The decreasing strings of letters in an nTL-monomial are very important in our study. Let us call the longest consecutive sequence of letters in an  nTL-monomial that is decreasing a \textit{decreasing run}. 

\begin{lemma}\label{thm:decreasing}
The indices in a decreasing run go down by one. 
\end{lemma}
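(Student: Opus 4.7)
The plan is to prove the statement by contradiction using lexicographic minimality. Suppose an nTL-monomial contains a decreasing run $x_{i_1} x_{i_2} \cdots x_{i_k}$ with $i_1 > i_2 > \cdots > i_k$, and assume for contradiction that some consecutive pair has a gap of size at least two, i.e. $i_j - i_{j+1} \geq 2$ for some $j$. Then $|i_j - i_{j+1}| > 1$, so the commutation relation $x_a x_b = x_b x_a$ (valid whenever $|a-b|>1$) allows us to interchange $x_{i_j}$ and $x_{i_{j+1}}$. This swap takes place inside the longer word but does not disturb any other positions, so the resulting monomial represents exactly the same algebra element.

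The key observation is that the swapped monomial is lexicographically smaller than the original. Both words have the same length and agree in the first $j-1$ positions; at position $j$, the original has index $i_j$ while the swapped word has index $i_{j+1} < i_j$. This contradicts the assumption that the original monomial is the lex-smallest representative of its equivalence class. Since by definition a decreasing run satisfies $i_j > i_{j+1}$, the only remaining possibility is $i_j - i_{j+1} = 1$, which is exactly what we wanted.

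There isn't really a hard step here; the only mild subtlety is checking that performing an adjacent commutation deep inside a longer monomial produces an equivalent monomial (it does, since the relation $x_a x_b = x_b x_a$ holds in the algebra and is applied to a contiguous subword) and that the resulting word is indeed still compared lex-smaller even though the swap may destroy the maximality of the particular decreasing run. The latter point is immaterial: we only need some equivalent word that is lex smaller to contradict the minimality defining an nTL-monomial.
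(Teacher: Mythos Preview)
Your proof is correct and follows exactly the same idea as the paper: if two consecutive letters in a decreasing run differ by more than one, the commutation relation lets you swap them to obtain a lexicographically smaller equivalent word, contradicting the definition of an nTL-monomial. The paper states this in one sentence; your version simply spells out the details.
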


\begin{proof}
If an nTL-monomial contains a substring $x_ix_j$ with $j < i-1$, then it is not in its earliest lexicographic form as we can switch $x_i$ and $x_j$.
\end{proof}

Call the first generator of a decreasing run \textit{a peak}. Thus we can write down an nTL-monomial as a sequence of pairs $(p_i,r_i)$, where $p_i$ is the index of the $i$-th peak and $r_i$ is the length of the $i$-th run.

For example, the nTL-monomial $x_3x_2x_1x_4x_3$ has two runs and can be represented as two pairs $(3,3)$, $(4,2)$.

Notice that the sum of the runs' lengths, $\sum_i r_i$, is the degree of the nTL-monomial.

We want to find the properties of the nTL-monomials and the constraints under which they are in one-to-one correspondence with the sequences of pairs. The first lemma does not need a proof.

\begin{lemma}
The length of a run of an nTL-monomial cannot be greater than the index of its peak: $r_i \leq p_i$.
\end{lemma}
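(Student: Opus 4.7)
The plan is to derive the bound $r_i \le p_i$ as a direct consequence of Lemma~\ref{thm:decreasing} together with the fact that generator indices must lie in the range $\{1, 2, \ldots, n\}$. Since the lemma is stated without proof in the excerpt, the author presumably intends exactly this one-line argument, so my proposal is essentially to spell out that one line.

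First I would invoke Lemma~\ref{thm:decreasing} to write the $i$-th decreasing run explicitly as the string
\[
x_{p_i}\, x_{p_i - 1}\, x_{p_i - 2}\, \cdots\, x_{p_i - r_i + 1},
\]
so that the smallest index appearing in the run is $p_i - r_i + 1$. Then I would observe that every generator $x_k$ is only defined for $1 \le k \le n$, so in particular the smallest index in the run must satisfy $p_i - r_i + 1 \ge 1$, which rearranges to $r_i \le p_i$.

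There is no real obstacle here: the content of the lemma is simply that a strictly decreasing integer sequence starting at $p_i$ and stepping down by $1$ cannot contain more than $p_i$ positive terms before it would be forced to use a nonpositive (and hence nonexistent) generator index. This is why the authors note the lemma ``does not need a proof,'' and my write-up would be no more than the two sentences above.
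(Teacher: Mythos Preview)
Your proposal is correct and matches the paper's intent exactly: the authors write that this lemma ``does not need a proof,'' and you have simply spelled out the evident one-line reason, namely that by Lemma~\ref{thm:decreasing} the run is $x_{p_i} x_{p_i-1}\cdots x_{p_i-r_i+1}$, and the last index must be at least~$1$. There is nothing to add.
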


Here is the next constraint. 

\begin{lemma}\label{thm:peaksincreasing}
The indices of the peaks of an nTL-monomial are in increasing order: $p_i < p_{i+1}$.
\end{lemma}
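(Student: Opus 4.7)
The plan is to argue by contradiction: suppose $p_{i+1} \le p_i$, and derive a contradiction in every case, using either reducibility, non-minimality in lex order, or a conflict with the maximality of $r_i$. Write $a = p_i - r_i + 1$ for the last (smallest) letter of run $i$, and $b = p_{i+1}$ for the peak that starts run $i+1$. So the monomial contains the substring
\[
x_{p_i}\, x_{p_i-1}\, x_{p_i-2}\, \cdots\, x_{a+1}\, x_a\, x_b,
\]
with the promise that $b \le p_i$.

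First I would dispense with the two easy boundary cases. If $b < a - 1$, then $|a - b| > 1$, so $x_a$ and $x_b$ commute by relation two; swapping them yields a lexicographically earlier equivalent word, contradicting that our monomial is an nTL-monomial. If $b = a - 1 = p_i - r_i$, then $x_b$ extends the decreasing run $x_{p_i} \cdots x_a$ down by one more step, contradicting the definition of $r_i$ as the length of a \emph{maximal} decreasing run.

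The heart of the argument is the remaining range $a \le b \le p_i$, where I would show the monomial is reducible, contradicting irreducibility. The subcase $b = a$ gives $x_a x_a = 0$ directly, and $b = a+1$ gives the substring $x_{a+1} x_a x_{a+1} = 0$ by the third relation. For $b \ge a+2$, the key observation is that $x_b$ commutes with each of $x_a, x_{a+1}, \ldots, x_{b-2}$, since each differs from $b$ by at least $2$. Commuting the rightmost $x_b$ leftward through these letters one by one (they appear consecutively in the substring above) transforms the relevant piece into
\[
x_b\, x_{b-1}\, x_b\, x_{b-2}\, x_{b-3}\, \cdots\, x_{a+1}\, x_a,
\]
and then the substring $x_b x_{b-1} x_b = 0$ kills the whole monomial.

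I expect the main obstacle to be bookkeeping rather than any deep step: one has to notice that the middle range $a \le b \le p_i$ is exactly the range in which $x_b$ appears once inside run $i$, and that although $b$ is not adjacent to its occurrence in run $i+1$, the intervening letters are far enough from $b$ in index to commute past it. Everything else reduces to a careful case split on where $b$ sits relative to the interval $[a-1, p_i]$.
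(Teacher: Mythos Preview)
Your argument is correct and is exactly the paper's approach spelled out in full: the paper simply notes that the next peak $x_j$ must satisfy $j>k$ (your $b\ge a$, since otherwise $x_j$ would not begin a new maximal run) and that $j\le i$ then places $x_j$ inside the previous run, forcing a reduction to zero---precisely your case $a\le b\le p_i$, with your remaining cases $b<a-1$ and $b=a-1$ absorbed into the ``otherwise $x_j$ is not a peak'' clause. One tiny slip: $x_{a+1}x_ax_{a+1}=0$ is an instance of the \emph{fourth} listed relation $x_{i+1}x_ix_{i+1}=0$, not the third.
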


\begin{proof}
Suppose $x_i$ and $x_j$ are two consecutive peaks in an nTL-monomial. The run that starts with $x_i$ ends in $x_k$, where $k < j$, otherwise, $x_j$ is not a peak. If $j \leq i$, then $x_j$ also is in the run, and we will be able to reduce the nTL-monomial to zero.
\end{proof}

Let us define a \textit{valley} as the last generator in a run. We denote the $i$th valley as $v_i$. As the indices in a run go down by one, we can see that $v_i = p_i-r_i+1$.

The following lemma can be proved the same way as Lemma~\ref{thm:peaksincreasing}, or by invoking the symmetry between runs and valleys.

\begin{lemma}\label{thm:valleyssincreasing}
The indices of the valleys of an nTL-monomial are in increasing order: $v_i < v_{i+1}$.
\end{lemma}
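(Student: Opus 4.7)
The plan is to argue by contradiction, following the template of Lemma~\ref{thm:peaksincreasing}. Suppose the $i$-th and $(i+1)$-th runs satisfy $v_{i+1} \leq v_i$. By Lemma~\ref{thm:peaksincreasing} we already have $p_{i+1} > p_i$, and by definition $p_i \geq v_i$, so $p_{i+1} \geq v_i + 1$. Combined with $v_{i+1} \leq v_i$, this means the descending interval $[v_{i+1}, p_{i+1}]$ realized by the $(i+1)$-th run strictly contains the two consecutive indices $v_i$ and $v_i + 1$. The goal is to exhibit, on the right-hand side of this structural constraint, an occurrence of the forbidden pattern $x_{v_i} x_{v_i+1} x_{v_i}$, which would force the monomial to reduce to $0$ and contradict irreducibility.

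The key step is to locate the two copies of $x_{v_i}$ and slide them together using the commutation relations. The relevant substring of the nTL-monomial has the form
\[
\cdots x_{v_i} \cdot x_{p_{i+1}} x_{p_{i+1}-1} \cdots x_{v_i + 2}\, x_{v_i+1}\, x_{v_i} \cdots,
\]
where the leading $x_{v_i}$ is the end of the $i$-th run and the trailing $x_{v_i}$ sits inside the $(i+1)$-th run, which is guaranteed to reach $v_i$ because $v_{i+1} \leq v_i < p_{i+1}$. Every generator in the chunk $x_{p_{i+1}} x_{p_{i+1}-1} \cdots x_{v_i+2}$ has index at least $v_i + 2$, so it commutes with $x_{v_i}$. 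Commuting the leading $x_{v_i}$ rightward past this entire chunk produces the substring $x_{v_i} x_{v_i+1} x_{v_i}$, which is zero by the defining relation. Hence the monomial is reducible, contradicting the assumption that it is an nTL-monomial.

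The only subtlety is checking that the $(i+1)$-th run really does contain both $x_{v_i+1}$ and $x_{v_i}$ as claimed, but this is immediate from Lemma~\ref{thm:decreasing} together with the inequalities $p_{i+1} \geq v_i + 1$ and $v_{i+1} \leq v_i$. The alternative route suggested by the paper — invoking the symmetry that reverses monomials and swaps the roles of peaks and valleys — would also work, but would require first verifying that the reversal operation preserves irreducibility and interchanges the "peak-increasing" and "valley-increasing" conditions, which feels heavier than the short commutation argument above.
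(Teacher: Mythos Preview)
Your argument is correct and is exactly the ``same way as Lemma~\ref{thm:peaksincreasing}'' route the paper points to: you locate two copies of $x_{v_i}$ in adjacent runs, commute the valley of the $i$-th run past the high-index prefix of the $(i+1)$-th run, and produce the forbidden pattern $x_{v_i}x_{v_i+1}x_{v_i}$. One small remark: you invoke Lemma~\ref{thm:peaksincreasing} to obtain $p_{i+1}>v_i$, but this already follows from the maximality of decreasing runs together with $x_i^2=0$ and the lex-minimality condition; either justification is fine.
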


The following theorem shows that there are no more constraints.

\begin{theorem}\label{thm:bijectionpairs}
If there is a sequence of pairs satisfying the constraints defined in Lemmas~\ref{thm:decreasing}-\ref{thm:valleyssincreasing}, then there exists an nTL-monomial corresponding to it.
\end{theorem}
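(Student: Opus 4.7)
The plan is to construct an explicit candidate monomial from the given data and verify it has all the properties of an nTL-monomial with the prescribed peak/run structure. Given pairs $(p_i,r_i)_{i=1}^k$ satisfying Lemmas~\ref{thm:decreasing}--\ref{thm:valleyssincreasing} together with $r_i \le p_i$, set $v_i = p_i - r_i + 1 \ge 1$ and define
\[
M \;=\; \bigl(x_{p_1} x_{p_1-1} \cdots x_{v_1}\bigr)\bigl(x_{p_2} x_{p_2-1} \cdots x_{v_2}\bigr)\cdots\bigl(x_{p_k} x_{p_k-1} \cdots x_{v_k}\bigr).
\]
The runs of $M$ are exactly these blocks: within each block the indices drop by one, while at a boundary the adjacent pair is $x_{v_i} x_{p_{i+1}}$ with $p_{i+1} \ge v_{i+1} > v_i$ by Lemma~\ref{thm:valleyssincreasing}, so a new decreasing run begins at $x_{p_{i+1}}$.

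Lex-minimality is the easier of the two remaining tasks. Any two nonzero monomials in the same equivalence class are related by a sequence of commutations $x_a x_b \leftrightarrow x_b x_a$ with $|a-b|>1$, since every other defining relation sends a monomial to zero. Inside a block of $M$ the adjacent letters differ in index by exactly one and are not commutable; at a boundary they are already in ascending order. Hence no single adjacent commutation can decrease the lex order, so $M$ is lex-minimal among the nonzero representatives of its equivalence class.

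The main obstacle is proving that $M$ is actually nonzero. I would show that no commutation-equivalent rearrangement of $M$ can contain a forbidden substring $x_i x_i$, $x_i x_{i+1} x_i$, or $x_{i+1} x_i x_{i+1}$. The key technical point is the following: suppose $x_i$ appears in two blocks $R_a$ and $R_b$ with $a<b$. If $x_i$ is not the last letter of $R_a$, it is immediately followed inside $R_a$ by the non-commuting generator $x_{i-1}$; if $x_i$ is not the first letter of $R_b$, it is immediately preceded inside $R_b$ by the non-commuting generator $x_{i+1}$; and the remaining case $v_a = p_b = i$ is impossible, because it would force $p_a \ge v_a = i = p_b$, contradicting Lemma~\ref{thm:peaksincreasing}. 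Thus no two copies of $x_i$ in $M$ can be commuted to adjacency. An analogous case analysis, using Lemmas~\ref{thm:peaksincreasing} and~\ref{thm:valleyssincreasing}, rules out the alternating triples $x_i x_{i\pm 1} x_i$ across any block interface. With no reduction rule ever applicable to any rearrangement, $M$ is nonzero, and its extracted run decomposition recovers the prescribed pairs $(p_i,r_i)_{i=1}^k$.
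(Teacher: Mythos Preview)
Your proof is correct and follows essentially the same strategy as the paper's: construct the candidate monomial from the prescribed runs, check lex-minimality via the observation that every adjacent commutable pair is already in ascending order, and verify irreducibility by exhibiting a non-commuting blocker trapped between any two occurrences of the same generator. The paper phrases the irreducibility step in terms of the mobility of each individual letter (peak, valley, or interior) rather than pairs of equal letters, but the idea is the same; your ``analogous case analysis'' for the triples is indeed valid, since between any two copies of $x_i$ both an $x_{i-1}$ (after the earlier copy) and an $x_{i+1}$ (before the later copy) are permanently trapped, which simultaneously rules out $x_ix_i$, $x_ix_{i+1}x_i$, and $x_{i+1}x_ix_{i+1}$.
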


\begin{proof}
First, any nTL-monomial constructed by a sequence of pairs following the given constraints cannot be permuted to create a monomial that is lexicographically before our nTL-monomial. Now, we wish to show that the constructed nTL-monomial cannot be reduced to zero. Take any element $x_i$ anywhere in our nTL-monomial. If this element is not a peak or a valley, it will be impossible to move it, because it is surrounded by elements that it does not commute with. So, we will not be able to form the patterns $x_ix_i$, $x_ix_{i+1}x_i$, or $x_{i+1}x_ix_{i+1}$, and therefore we will be unable to reduce our nTL-monomial to zero.

If our chosen element $x_i$ is a valley, then we will be able to move it in the forward direction until we reach an element $x_{i+1}$. However, since the valleys are strictly increasing and the valley will have the lowest index of all the elements in the run, there will be no more $x_i$ elements. So, it will be impossible to create the pattern $x_ix_{i+1}x_i$. The argument for if the chosen element is a peak is essentially the same, except for the fact that the peak can only move backwards instead of forwards in the  nTL-monomial.

So, any monomial constructed from pairs following these constraints is an  nTL-monomial.
\end{proof}

\section{Dyck Paths}\label{sec:Dyck}

Now we want to build a bijection between the  nTL-monomials and Dyck paths. We will use the Dyck paths represented as mountain ranges: the up direction is NE and the down direction is SE. In addition, the range never crosses the $x$-axis and ends on the $x$-axis.

We correspond an nTL-monomial to a Dyck path in the following manner. Suppose there is a hill with the top located at coordinates $(a,b)$. Then we correspond it to a run of length $b-1$ starting with a letter $x_k$, where $k= (a+b-2)/2$. That is, we correspond it to a pair $((a+b-2)/2,b-1)$.

In particular, the last index of the run is $(a+b-2)/2 -b+1= (a-b)/2$. In addition, all the small hills of height 1 are ignored.

Figure~\ref{paths} shows 14 Dyck paths of length 8. The caption under each picture describes the corresponding nTL-monomials.

\begin{figure}[htbp]
\begin{center}
\begin{tabular}{ccccc}
\includegraphics[scale=0.15]{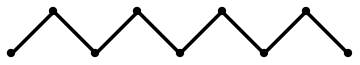} & 
\includegraphics[scale=0.15]{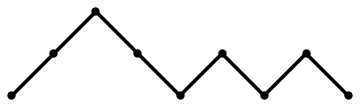} &
\includegraphics[scale=0.15]{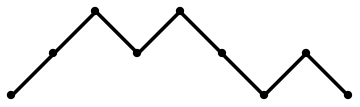} &
\includegraphics[scale=0.15]{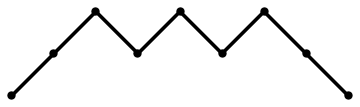} &
\includegraphics[scale=0.15]{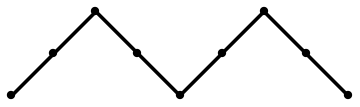} \\
1 & $x_1$ & $x_1x_2$ & $x_1x_2x_3$ & $x_1x_3$ \\
\includegraphics[scale=0.15]{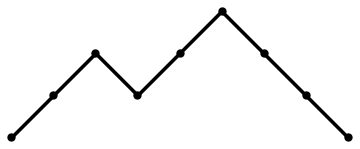} & 
\includegraphics[scale=0.15]{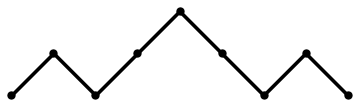} &
\includegraphics[scale=0.15]{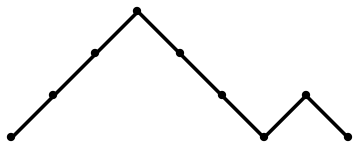} &
\includegraphics[scale=0.15]{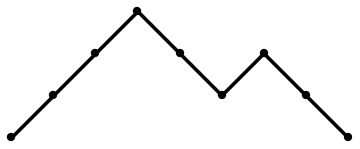} &
\includegraphics[scale=0.15]{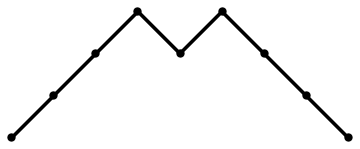} \\
$x_1x_3x_2$ & $x_2$ & $x_2x_1$ & $x_2x_1x_3$ & $x_2x_1x_3x_2$
\\

\includegraphics[scale=0.15]{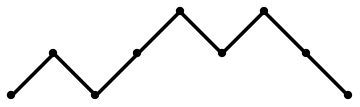} & 
\includegraphics[scale=0.15]{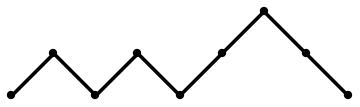} &
\includegraphics[scale=0.15]{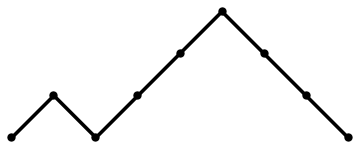} &
\includegraphics[scale=0.15]{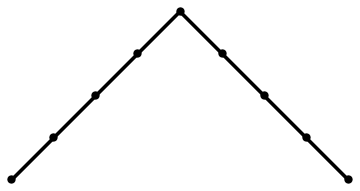} & \\
$x_2x_3$ & $x_3$ & $x_3x_2$ & $x_3x_2x_1$ &
\\
\end{tabular}
\caption{The Dyck paths of length eight and their corresponding nTL-monomials} \label{paths}
\end{center}
\end{figure}

\begin{theorem}
The described correspondence is a bijection.
\end{theorem}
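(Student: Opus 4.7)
The plan is to write down the inverse map explicitly and verify that it undoes the correspondence described above. For the forward direction, given a Dyck path of length $2(n+1)$ I would list its local maxima $(a_1,b_1),\dots,(a_m,b_m)$ from left to right and keep those with $b_i\ge 2$; then I would check that the resulting pairs $(p_i,r_i)=((a_i+b_i-2)/2,\,b_i-1)$ satisfy the hypotheses of Theorem~\ref{thm:bijectionpairs}. If $U$ and $D$ denote the numbers of up- and down-steps in the subpath from $P_i$ to $P_{i+1}$, then $U+D=a_{i+1}-a_i$ and $U-D=b_{i+1}-b_i$ give $U=p_{i+1}-p_i$ and $D=v_{i+1}-v_i$; since the step just after $P_i$ is a down-step and the step just before $P_{i+1}$ is an up-step, $U\ge 1$ and $D\ge 1$, so $p_i<p_{i+1}$ and $v_i<v_{i+1}$. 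The inequality $r_i\le p_i$ is equivalent to $b_i\le a_i$, which holds because reaching height $b_i$ requires at least $b_i$ up-steps, and Theorem~\ref{thm:bijectionpairs} then supplies the nTL-monomial.

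Next I would define the inverse map. Given pairs $(p_i,r_i)$ meeting the constraints, I would place peaks at $(a_i,b_i)=(2p_i-r_i+1,\,r_i+1)$ and fill the surrounding path canonically: before $P_1$, take $v_1-1$ unit arches $UD$ along the $x$-axis followed by $b_1$ up-steps; after $P_k$, take $b_k$ down-steps followed by $n-p_k$ unit arches; between $P_i$ and $P_{i+1}$, if $a_{i+1}-a_i\ge b_i+b_{i+1}$ descend fully to the $x$-axis, insert $(a_{i+1}-a_i-b_i-b_{i+1})/2$ unit arches, and then ascend fully, and otherwise use the unique monotone segment $D^dU^u$ with $d=v_{i+1}-v_i$ and $u=p_{i+1}-p_i$. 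The parity identity $a_{i+1}-a_i-b_i-b_{i+1}=2(p_{i+1}-p_i-r_{i+1}-1)$ guarantees all step counts are integers; all are non-negative by the strict inequalities on $p_i$ and $v_i$ together with $r_i\le p_i$ and $p_k\le n$; the path never passes below the $x$-axis since in the monotone-segment case the valley height $(b_i+b_{i+1}-(a_{i+1}-a_i))/2$ is non-negative exactly in that sub-case; and the total length is $2(n+1)$ by direct summation.

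It remains to check that the two maps are mutually inverse. In one direction, the local maxima of the constructed path that have height $\ge 2$ are exactly the specified $(a_i,b_i)$, because the filler segments introduce only height-$1$ peaks or a single monotone turn. In the other direction, starting from a Dyck path one must show that between two consecutive nontrivial peaks the path is forced into the shape just described; the argument splits on whether $a_{i+1}-a_i\ge b_i+b_{i+1}$ or not, and rules out any intermediate local maximum at height $\ge 2$ that would contradict consecutivity. This case analysis is the main technical obstacle; once it is handled the rest of the verification is bookkeeping.
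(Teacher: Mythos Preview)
Your proof is correct and follows the same strategy as the paper's: verify that the forward map sends a Dyck path to a sequence of pairs satisfying the constraints of Lemmas~\ref{thm:decreasing}--\ref{thm:valleyssincreasing}, then argue bijectivity via the inverse construction. You are considerably more explicit than the paper, which simply asserts that ``a Dyck path is uniquely defined by its non-trivial hills'' and sketches the inverse only informally after the proof; your explicit filler construction and the case split on whether $a_{i+1}-a_i \ge b_i+b_{i+1}$ supply precisely the justification that assertion requires.
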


\begin{proof}
Let us show that a Dyck path generates the sequence of pairs with correct constraints.

\textit{The length of the run}. We need to show that $b-1 \leq (a+b-2)/2$. This is equivalent to $b \leq a$, which is true as the Dyck path does not go over the line $x=y$.

\textit{The peaks are increasing}. Suppose $(a_i,b_i)$ and $(a_{i+1},b_{i+1})$ are two consecutive non-trivial hills. Then $a_{i+1}+b_{i+1} > a_i+b_i$. This is the same as saying that the line corresponding to the right side of the next hill is above the line corresponding to the right side of the previous hill. This is exactly the condition $p_{i+1} > p_i$.

\textit{The valleys are increasing}. The valley $v_i = p_i - r_i +1 = (a_i+b_i -2)/2 - b_i +1 = (a_i-b_i)/2$. The statement is true because the line corresponding to the left side of the next hill is below the line corresponding to the left side of the previous hill.

This shows that the map is onto. What is left to see is that we cannot get the same nTL-monomial from two different Dyck paths. A Dyck path is uniquely defined by its non-trivial hills. And different sequences of non-trivial hills produce different sequences of pairs.
\end{proof}

The reversal of this bijection describes how we correspond a Dyck path to a sequence of pairs. For every pair $(p_i,r_i)$ we draw a mountain top with coordinates $(p_i+r_i+1,r_i+1)$. Furthermore, in order to get the complete picture, we superimpose the different peaks corresponding to the different runs on top of each other and if we have a flat space, we replace that space with hills of height one.

\section{What follows from the bijection}\label{sec:thm}

Given that we found a bijection with Dyck paths, the Catalan numbers appear immediately.

\begin{corollary}
The dimension of the algebra corresponding to the path graph $P_n$ is the $(n+1)$-st Catalan number.
\end{corollary}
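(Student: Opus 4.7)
The plan is essentially immediate from the bijection just established. Since the nTL-monomials form a basis of the algebra corresponding to $P_n$, the dimension of that algebra equals the number of nTL-monomials in it. The preceding theorem says that nTL-monomials in this algebra are in bijection with Dyck paths (via sequences of pairs $(p_i, r_i)$), so the dimension equals the number of Dyck paths in the image of the bijection. I would then invoke the classical fact that the number of Dyck paths of length $2m$ is $C_m$ and match up $m$ with $n+1$.

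The one thing I would verify carefully is that the Dyck paths in the image have semilength exactly $n+1$. The allowed sequences of pairs satisfy $1 \le p_i \le n$ together with $r_i \le p_i$ and the monotonicity of peaks and valleys from Lemmas~\ref{thm:peaksincreasing} and~\ref{thm:valleyssincreasing}. Translating these into conditions on the hills via the correspondence given in Section~\ref{sec:Dyck}, all non-trivial hills fit inside the strip $0 \le x \le 2(n+1)$, and the inverse construction fills any flat segment with height-one hills so that the path ends on the $x$-axis exactly at $x = 2(n+1)$. The empty sequence of pairs corresponds to the Dyck path consisting of $n+1$ hills of height one, matching the $P_3$ picture in Figure~\ref{paths}, where the $14 = C_4$ Dyck paths all have length $8 = 2(n+1)$.

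With the semilength pinned down to $n+1$, the corollary follows from the well-known enumeration of Dyck paths by Catalan numbers \cite{Stanley2015}. The main obstacle, and it is a mild one, is the bookkeeping to confirm the horizontal span comes out to $2(n+1)$ and not something off by a constant; once that is checked, no further work is needed.
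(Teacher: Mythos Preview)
Your argument is correct and matches the paper's approach: the corollary is stated there as an immediate consequence of the bijection with Dyck paths, with no further proof given. Your additional care in verifying that the semilength comes out to $n+1$ (which the paper leaves implicit via the $P_3$ example in Figure~\ref{paths}) is a welcome bit of bookkeeping but does not change the method.
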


The bijection also allows us to describe the distribution of nTL-monomials by degree.

\begin{theorem}
The number of nTL-monomials of degree $d$ in the algebra corresponding to the path graph $P_n$ is equal to the number of Dyck paths of length $2n$ such that the sum of heights of peaks minus the number of peaks is $d$.
\end{theorem}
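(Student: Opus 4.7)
The plan is to deduce the result directly from the bijection of the previous section, so the main task is bookkeeping: track what the degree statistic on nTL-monomials becomes on the Dyck-path side.

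First, recall that the degree of an nTL-monomial is $\sum_i r_i$, where $r_i$ is the length of the $i$-th decreasing run, and that under the bijection each non-trivial hill (i.e., peak of height $\geq 2$) with top at $(a,b)$ corresponds to a pair $(p_i,r_i) = ((a+b-2)/2,\, b-1)$. In particular, if $h_i = b$ denotes the height of that peak, then $r_i = h_i - 1$. Trivial hills, which have height $h_i = 1$, are ignored by the bijection and contribute nothing to any run.

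The next step is to rewrite the statistic on Dyck paths. Writing $H$ for the sum of heights of peaks and $N$ for the number of peaks, we have
\begin{equation*}
H - N \;=\; \sum_{\text{peaks}} (h_i - 1) \;=\; \sum_{\text{non-trivial peaks}} (h_i - 1),
\end{equation*}
since trivial peaks contribute $h_i - 1 = 0$. By the previous paragraph, each term $h_i - 1$ on the right equals the length $r_i$ of the corresponding run in the associated nTL-monomial, so $H - N = \sum_i r_i$, which is precisely the degree.

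Thus the bijection of the previous theorem refines into a degree-preserving bijection between nTL-monomials of degree $d$ in the algebra of $P_n$ and Dyck paths of length $2n$ for which the sum of peak heights minus the number of peaks equals $d$. The only subtle point — and the one place I would be careful writing it up — is handling the trivial hills of height $1$, which play no role in the monomial but must be counted among the peaks when forming the statistic $H - N$; fortunately they contribute $0$ to that quantity, so the identification goes through cleanly.
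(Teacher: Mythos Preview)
Your proof is correct and follows exactly the paper's approach: both arguments deduce the theorem from the bijection of the previous section by observing that $r_i = h_i - 1$ for each non-trivial peak, so that $\sum r_i = \sum (h_i - 1) = H - N$, with the only point requiring care being that trivial hills contribute zero to $H - N$. The paper's own proof is a single sentence noting that trivial hills contribute zero; you have simply written out the bookkeeping in full.
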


\begin{proof}
We only need to mention that trivial hills contribute zero to the sum.
\end{proof}

For example, the distribution by degree for $P_3$ is: 1, 3, 5, 4, and 1, with the total dimension of 14.

\section{321-avoiding permutations}\label{sec:avoiding}

The distribution by degree appears in the On-Line Encyclopedia of Integers Sequences as sequence  A140717 \cite{oeis140717}. It is defined there through 321-avoiding permutations having a given inversion number.

The connection between nil-Temperley-Lieb algebras and permutations is well known \cite{Billey1993,Cheng2012,Pesiri2011}. We correspond to an element $x_i$ a permutation switching elements $i$ and $i+1$. This element is denoted as $s_i$. Then a run $x_{i-1}\ldots x_{j}$ corresponds to the product $s_{i-1} \ldots s_{j}$ that is a permutation that swaps $i$ and $j$. This swap is usually denoted as the transposition $(i,j)$.

A well-known property of the 321-avoiding permutations is that each has a unique reduced expression of the form $(a_1,b_1),\ldots,(a_k,b_k)$, where $a_k > a_{k-1} > \ldots > a_1$ and $b_k > b_{k-1} > \ldots > b_1$ \cite{Rhoades2005}. The $a_i$ and $b_i$ represent the peaks and valleys, respectively, in our nTL-monomials and we know that the peaks and valleys in our monomials must be increasing. So, from each monomial we can create a unique 321-avoiding permutation. In paper \cite{Cheng2012} the number of inversions of 321-avoiding permutations is explained in terms of Dyck paths.

\section{Other properties}\label{sec:other}

Here we would like to mention some nice properties that follow from our bijection. 

Suppose an element $x_i$ is the $j$-th smallest/largest index in an nTL-monomial.

\begin{lemma}\label{thm:onex}
The elements $x_i$ appears not more than $j$ times in an nTL-monomial.
\end{lemma}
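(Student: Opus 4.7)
The plan is to read the count of occurrences of $x_i$ directly off the pair-description of an nTL-monomial guaranteed by Theorem~\ref{thm:bijectionpairs}. Each run $(p_k, r_k)$ is the decreasing string of consecutive indices from $p_k$ down to $v_k = p_k - r_k + 1$, so the generator $x_i$ occurs in the $k$-th run precisely when $v_k \le i \le p_k$, contributing exactly one copy. Consequently, the total number of occurrences of $x_i$ in the monomial equals the number of runs whose interval $[v_k, p_k]$ contains $i$.

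Call this number $m$ and list the relevant runs as $k_1 < k_2 < \cdots < k_m$. By Lemma~\ref{thm:valleyssincreasing} the valleys satisfy $v_{k_1} < v_{k_2} < \cdots < v_{k_m} \le i$, so these are $m$ distinct indices each at most $i$, and every one of them actually appears in the monomial as the last letter of its run. Dually, Lemma~\ref{thm:peaksincreasing} gives $i \le p_{k_1} < p_{k_2} < \cdots < p_{k_m}$, yielding $m$ distinct indices $\ge i$ that also appear in the monomial. I would then finish in two symmetric cases: if $x_i$ is the $j$-th smallest index in the monomial, only $j$ distinct indices $\le i$ appear, so the valley list forces $m \le j$; if $x_i$ is the $j$-th largest, the peak list gives $m \le j$ in the same way.

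I do not expect a real obstacle here; everything is a counting step once one commits to the pair-description from Theorem~\ref{thm:bijectionpairs}. The only point worth watching is that the valleys (respectively the peaks) are not merely bookkeeping quantities but are themselves generators appearing in the monomial, so they legitimately contribute to the tally of distinct indices $\le i$ (respectively $\ge i$).
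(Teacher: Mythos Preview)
Your argument is correct and follows the same idea as the paper's proof: a copy of $x_i$ lives in a run only if that run's valley is at most $i$, and the strictly increasing valleys then give distinct indices $\le i$ appearing in the monomial, bounding the count by $j$. The paper states only this valley half in a single sentence; your write-up is more explicit and also spells out the dual peak argument for the ``$j$-th largest'' case, which the paper leaves implicit.
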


\begin{proof}
The element $x_i$ can only appear in a run which ends at a valley that is not exceeding $i$. The lemma follows from the fact that valleys are increasing.
\end{proof}

We also want to provide some connections between the shapes of the Dyck paths and the corresponding nTL-monomials.

\begin{lemma}
If a Dyck path touches the baseline $y=0$ at $x=k$, then the element $x_{k/2}$ is missing from the corresponding nTL-monomial.
\end{lemma}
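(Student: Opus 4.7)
The plan is to exploit the fact that when a Dyck path touches the baseline at $x = k$, it decomposes into two independent Dyck sub-paths, one over $[0, k]$ and one over $[k, 2(n+1)]$, with no hill straddling the touch point. I would reduce the claim to showing that no hill on either side produces a run containing the index $k/2$.

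First, I would observe that $k$ must be even, since a Dyck path meets the $x$-axis only at even $x$-coordinates; so $k/2$ is a genuine integer and $x_{k/2}$ is a legitimate generator of the algebra. Next, from the bijection's forward formulas $p = (a+b-2)/2$ and $r = b-1$, I would rearrange to show that a hill with top $(a, b)$ occupies the interval $[\,2(v-1),\ 2(p+1)\,]$, where $v = p - r + 1$ is the valley of the associated run. This is the key translation between the geometric data of a hill and the algebraic data of its run.

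Then I would split into two cases. For any non-trivial hill lying in the left sub-path, its right endpoint satisfies $2(p+1) \leq k$, giving $p \leq k/2 - 1$; by Lemma~\ref{thm:decreasing} every index of the corresponding run is at most $p$, hence strictly less than $k/2$. For any non-trivial hill lying in the right sub-path, its left endpoint satisfies $2(v-1) \geq k$, giving $v \geq k/2 + 1$; every index of the corresponding run is at least $v$, hence strictly greater than $k/2$. Trivial hills contribute no generator at all and so cannot introduce $x_{k/2}$.

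Combining the cases, the index $k/2$ does not appear in any run of the monomial, so $x_{k/2}$ is absent. I expect no real obstacle beyond the small bookkeeping needed to convert hill containment into bounds on peak and valley indices; the conceptual engine of the proof is simply the decomposition of the path at the touch point, and everything else is immediate from the bijection.
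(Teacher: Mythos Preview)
Your proposal is correct and follows essentially the same idea as the paper: split the Dyck path at the touch point $x=k$ into two independent Dyck sub-paths, and observe that neither sub-path can contribute the generator $x_{k/2}$. The paper's proof is a one-line sketch (``split the path and shift the letters for the second half''), while you carry out the same decomposition explicitly by translating hill endpoints into the bounds $p\le k/2-1$ on the left and $v\ge k/2+1$ on the right; the only small point to make airtight is that for a hill with apex $(a,b)$ in the left sub-path one has $a+b\le k$ (and symmetrically $a-b\ge k$ on the right), which follows immediately from the sub-path itself being a Dyck path ending (respectively starting) at $(k,0)$.
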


\begin{proof}
Suppose that the path hits the baseline. Then, we can split our path into two separate paths and find the nTL-monomial for each path, where we shift the set of letters for the second nTL-monomial.
\end{proof}

If our path does not hit the baseline, then we can move the baseline up by one step and find the nTL-monomial corresponding to this new, shorter path. With this nTL-monomial, we extend each peak by adding one letter to the left of the nTL-monomial corresponding to the apex.

There is also a method to find the number of times each element appears in an nTL-monomial/Dyck path. First, we extend all hills so they hit the baseline on both sides. With this modified picture, we can find the number of times each element $x_k$ appears in the nTL-monomial.

\begin{lemma}
The number of times the  element $x_k$ appears in the nTL-monomial is the number of intersections that are not on the baseline between the left sides of the extended hills, which do not include the peaks, and the line $x+y=2k$.
\end{lemma}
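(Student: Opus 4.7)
The plan is to translate both sides of the claimed equality into simple arithmetic conditions on the peak coordinates $(a_i, b_i)$ of the Dyck path, and then verify the conditions match hill by hill.

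First I would observe that within a single run the indices are strictly decreasing, so $x_k$ can appear at most once per run, and hence the number of occurrences of $x_k$ in the nTL-monomial equals the number of runs whose index range contains $k$. Using the bijection of Section~\ref{sec:Dyck}, the $i$-th hill with apex $(a_i, b_i)$ contributes the run of indices $v_i, v_i+1,\ldots, p_i$ with $p_i = (a_i+b_i-2)/2$ and $v_i = (a_i-b_i)/2 + 1$. The condition $v_i \leq k \leq p_i$ therefore rewrites as the strict double inequality
\[
 \tfrac{a_i-b_i}{2} \;<\; k \;<\; \tfrac{a_i+b_i}{2}.
\]

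Second, I would parametrize the left side of the $i$-th extended hill. Since the Dyck path uses slopes $\pm 1$, the extended left side is the segment from $(a_i-b_i,0)$ up to $(a_i,b_i)$ lying along $y = x - (a_i-b_i)$. Its unique intersection with the line $x+y=2k$ occurs at $y = k - (a_i-b_i)/2$. Excluding the peak (requiring $y < b_i$) and the baseline (requiring $y > 0$) yields exactly $0 < k - (a_i-b_i)/2 < b_i$, which is the same open inequality as above. Summing over all hills then matches the occurrence count to the intersection count.

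The main obstacle is sanity-checking edge cases rather than any technical difficulty. One must verify that hills of height one, which contribute no letters to the nTL-monomial, also contribute no interior intersections with $x+y=2k$; this follows from the standard parity observation that $a_i+b_i$ is even at every peak of a lattice Dyck path, so when $b_i = 1$ no integer $k$ lies strictly between $(a_i-1)/2$ and $(a_i+1)/2$. One must also check that excluding peaks and baseline cannot accidentally discard a genuine intersection in the interior, but this is immediate because the line $x+y=2k$ meets the slope-$+1$ left side in exactly one point. With those checks in place, the hill-by-hill comparison gives the lemma.
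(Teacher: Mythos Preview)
Your argument is correct. The paper actually states this lemma without proof, so there is no ``paper's own proof'' to compare against; your hill-by-hill translation of both sides into the open inequality $(a_i-b_i)/2 < k < (a_i+b_i)/2$ is exactly the natural verification and fills the gap cleanly. (A small note: the paper's own computation of the valley index in Section~\ref{sec:Dyck} contains an off-by-one slip---it writes $v_i=(a_i-b_i)/2$---whereas your value $v_i=(a_i-b_i)/2+1$ is the correct one, and your argument depends on it.)
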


\section{Acknowledgements}

We would like to thank the MIT PRIMES program for supporting this research. We also want to thank Prof.~Postnikov for suggesting the project and Prof.~Stanley for helpful suggestions and references.

\end{document}